\documentclass[12pt]{article}

\usepackage{amsmath}
\usepackage{mathtools}
\usepackage{amssymb}
\usepackage{fullpage}
\usepackage{color}
\usepackage{enumerate}
\usepackage{cite}
\usepackage{dsfont}

\definecolor{lblue}{RGB}{0,110,152}

\definecolor{dred}{RGB}{171,67,53}

\newtheorem{theorem}{Theorem}

\newtheorem{proposition}[theorem]{Proposition}

\newtheorem{remark}[theorem]{Remark}

\newtheorem{problem}[theorem]{Problem}

\newcommand{\mendth}{\hfill \ensuremath{\vartriangle}}

\DeclareMathOperator*{\col}{col}

\DeclareMathOperator*{\diag}{diag}

\DeclareMathOperator{\eps}{\varepsilon}

\newenvironment{proof}{{\it Proof :~}}{\hfill$\diamondsuit$\\}

\begin{document}

\title{Simple interval observers for linear impulsive systems with applications to sampled-data and switched systems}

\author{Corentin Briat and Mustafa Khammash\thanks{Corentin Briat and Mustafa Khammash are with the Department of Biosystems Science and Engineering, ETH-Z\"{u}rich, Switzerland; email: mustafa.khammash@bsse.ethz.ch, corentin.briat@bsse.ethz.ch, corentin@briat.info; url: https://www.bsse.ethz.ch/ctsb/, http://www.briat.info.}}

\date{}

\maketitle



\begin{abstract}
Sufficient conditions for the design of a simple class of interval observers for linear impulsive systems subject to minimum and range dwell-time constraints are obtained and formulated in terms of infinite-dimensional linear programs. The proposed approach is fully constructive in the sense that suitable observer gains can be extracted from the solution of the optimization problems and is flexible enough to be extended to include performance constraints and parametric uncertainties. In order to be solvable, the infinite-dimensional linear programs are relaxed using a method based on sum of squares which is known to be asymptotically exact in the present case. Three examples are given for illustration: the first one pertains on the interval observation of an impulsive system under a minimum dwell-time constraint, the second one is about the interval observation of an aperiodic sampled-data system and the last one is about the interval observation of a linear switched system.
\end{abstract}

\section{Introduction}


Impulsive linear systems are an important class of hybrid systems that can be used to model a wide variety of real world processes \cite{Goebel:12} and to represent switched and sampled-data systems \cite{Naghshtabrizi:08,Goebel:12,Briat:13d,Briat:14f,Geromel:15}. Among this class of systems, the subclass of linear positive impulsive systems have been recently studied in \cite{Briat:16c} and several stability and stabilization conditions under various dwell-time constraints have been obtained and formulated in terms of, finite- or infinite-dimensional, linear programs which can then be solved using recent optimization techniques. Although restrictive, the class of linear positive impulsive systems can be used as comparison systems for more general impulsive systems or may be helpful in the design of interval observers. Interval observers have been first introduced in \cite{Gouze:00} in order to account for the presence of disturbances acting on the observed system. The key idea is to build two, possibly coupled, observers whose goal will be the real-time estimation of a lower bound and an upper bound for the state of the system. The problem of designing such observers have then been considered for many different classes of systems including systems with inputs \cite{Mazenc:11,Briat:15g}, linear systems \cite{Mazenc:12}, uncertain systems \cite{Bolajraf:15}, delay systems \cite{Efimov:13c,Briat:15g}, impulsive systems \cite{Degue:16nolcos}, LPV systems \cite{Efimov:13b}, discrete-time systems \cite{Mazenc:13,Briat:15g}, systems with samplings \cite{Mazenc:14b,Efimov:16}, etc. 

Some classes of interval observers for linear impulsive systems have been proposed in \cite{Degue:16nolcos}. Unfortunately, these reported conditions, which are based on the discrete-time quadratic conditions obtained in \cite{Briat:13d}, only characterize the asymptotic stability of the error dynamics and cannot be used for  design purposes because of their strong non-convex structure. Moreover, this approach does not exploit the positive nature of the error dynamics which may help in the design of suitable observer gains by, for instance, considering diagonal or linear Lyapunov functions. Finally, it is also difficult to guarantee the stability under a desired dwell-time constraint or to ensure a desired performance level as these properties can only be checked a posteriori; i.e. after manually selecting the gains of the interval observer. On the other hand, very few results have been devoted to the interval observation of sampled-data systems \cite{Mazenc:14b,Efimov:16} while, to the author's best knowledge, no results have been reported so far in the context of switched systems.

The main objective of the current paper will be derivation of constructive sufficient conditions for the design of a simple class of interval observers for linear impulsive systems that can guarantee prescribed range and minimum dwell-time constraints. As sampled-data systems and switched systems both admit an impulsive system representation, the developed approach also readily applies to those classes of systems. The overall method relies on the linear programming stability conditions recently obtained in \cite{Briat:16c} for linear positive impulsive/switched systems referred to as clock-dependent conditions because of their dependence of the (relative) time elapsed since the last discrete-event (i.e. the last impulse time or the last switching time). These conditions are the linear programming analogues of the semidefinite programming conditions obtained in \cite{Briat:13d,Briat:14f,Briat:15f} in the context of general linear impulsive, sampled-data and switched systems. These conditions have the benefits of being linear in the system matrices and to readily allow for the derivation of design conditions -- in the current context, for the design of interval observers. The conditions can be checked using discretization methods \cite{Allerhand:11}, using linear programming via the use of Handelman's theorem \cite{Handelman:88,Briat:11h} or using semidefinite programming via the use of Putinar's Positivstellensatz \cite{Putinar:93} combined with computational sum of square methods \cite{Parrilo:00,sostools3}. The proposed approach is flexible enough to incorporate performance constraints (e.g. in the $L_1$ or the $L_\infty$ sense \cite{Briat:11h,Briat:15g}) and uncertain parameters. These extensions, however, are left for future research due to space restrictions.

\noindent\textbf{Outline.} The structure of the paper is as follows: in Section \ref{sec:preliminary} preliminary definitions and results are given. Section \ref{sec:main} is devoted to the derivation of design conditions for the considered class of interval observers. Computational issues are discussed in Section \ref{sec:computational} and examples, finally, are treated in Section \ref{sec:examples}.\\

\noindent\textbf{Notations.} The set of nonnegative integers is denoted by $\mathbb{N}_0$. The cones of positive and nonnegative vectors of dimension $n$ are denoted by $\mathbb{R}_{>0}^n$ and $\mathbb{R}_{\ge0}^n$, respectively. The set of diagonal matrices of dimension $n$ is denoted by $\mathbb{D}^n$ and the subset of those being positive definite is denoted by $\mathbb{D}_{\succ0}^n$. The $n$-dimensional vector of ones is denoted by $\mathds{1}_n$. For some elements, $\{x_1,\ldots,x_n\}$, the operator $\diag_{i=1}^n(x_i)$ builds a matrix with diagonal entries given by $x_1,\ldots,x_n$ whereas $\col_{i=1}^n(x_i)$ creates a vector by vertically stacking them with $x_1$ on the top.

\section{Preliminaries on linear positive impulsive systems}\label{sec:preliminary}

The objective of this section is to recall few results about linear positive impulsive systems \cite{Briat:16c}. To this aim, let us consider the following class of linear impulsive system:
\begin{equation}\label{eq:mainsyst2}
\begin{array}{rcl}
  \dot{x}(t_k+\tau)&=&A(\tau)x(t_k+\tau)+E_c(\tau)w_c(t_k+\tau),\ \tau\in(0,T_k]\\
  x(t_k^+)&=&Jx(t_k)+E_dw_d(k)
\end{array}
\end{equation}
where $k\in\mathbb{N}_0$, $x(t_k^+):=\lim_{s\downarrow t_k}x(s)$ and the matrix-valued functions $A(\tau)\in\mathbb{R}^{n\times n}$ and $E(\tau)\in\mathbb{R}^p$ are continuous and bounded. The sequence of impulse times $\{t_k\}_{k\in\mathbb{N}_0}$, $t_0=0$, is assumed to verify the properties: (a) $T_k:=t_{k+1}-t_k>0$ for all $k\in\mathbb{N}_0$ and (b) $t_k\to\infty$ as $k\to\infty$. When all the above properties hold, the solution of the system \eqref{eq:mainsyst2} exists for all times.

We have the following result regarding the state positivity of the impulsive system \eqref{eq:mainsyst2}.
\begin{proposition}
The following statements are equivalent:
\begin{enumerate}[(a)]
  \item The system \eqref{eq:mainsyst2} is state positive, i.e. for any $x_0\ge0$, $w_c(t)\ge0$ and $w_d(k)\ge0$, we have that $x(t)\ge0$ for all $t\ge0$.
  \item The matrix-valued function $A(\tau)$ is Metzler for all $\tau\ge0$, the matrix-valued function $E_c(\tau)$ is nonnegative for all $\tau\ge0$ and the matrices $J,E_d$ are nonnegative.
\end{enumerate}
\end{proposition}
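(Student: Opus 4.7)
The plan is to prove the two implications separately. The forward direction, (b) implies (a), proceeds by induction on the impulse index $k$ using the classical characterization of positivity for linear time-varying continuous-time systems with Metzler generator. The reverse direction, (a) implies (b), is established by feeding the system carefully chosen initial conditions and inputs that isolate individual entries of each matrix, and by exploiting the freedom in selecting the impulse sequence to access arbitrary values of $\tau$.

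For (b) implies (a), fix any admissible impulse sequence and suppose $x(t_k^+) \ge 0$. On $(t_k, t_{k+1}]$ the flow reads $\dot{x}(t_k + \tau) = A(\tau)x(t_k + \tau) + E_c(\tau)w_c(t_k+\tau)$. Since $A(\tau)$ is Metzler and $E_c(\tau)w_c(t_k+\tau) \ge 0$ for all $\tau \in (0, T_k]$, the standard result that the state transition matrix $\Phi(\tau,s)$ of a Metzler time-varying generator is entrywise nonnegative yields, via variation of constants, $x(t_{k+1}) \ge 0$. The jump then gives $x(t_{k+1}^+) = Jx(t_{k+1}) + E_d w_d(k+1) \ge 0$ from $J, E_d \ge 0$ and $w_d(k+1) \ge 0$. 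The base case $x(t_0) = x_0 \ge 0$ closes the induction.

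For (a) implies (b), I exploit four specific test scenarios. Taking $x_0 \ge 0$ arbitrary, $w_c \equiv 0$, $w_d \equiv 0$, positivity at $t_0^+ = 0^+$ forces $Jx_0 \ge 0$ for every $x_0 \ge 0$, hence $J \ge 0$ entrywise. Taking $x_0 = 0$, $w_c \equiv 0$ and $w_d(0)$ arbitrary nonnegative yields $E_d w_d(0) \ge 0$, hence $E_d \ge 0$. To recover the Metzler property of $A(\tau_0)$ at an arbitrary $\tau_0 \ge 0$, I choose $T_0 > \tau_0$ (permissible since the impulse times are free parameters), set $w_c \equiv 0$, $w_d \equiv 0$, and apply the standard Metzler argument: the fundamental matrix satisfies $\Phi(\tau_0 + h, \tau_0) = I + hA(\tau_0) + o(h)$ and must be entrywise nonnegative for small $h > 0$ because $\Phi(\tau_0 + h, \tau_0)e_i = x(\tau_0 + h)$ when one initializes the flow from $e_i$ at time $\tau_0$; this forces the off-diagonal entries of $A(\tau_0)$ to be nonnegative, i.e. $A(\tau_0)$ Metzler. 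Continuity of $A$ is used here to make the $o(h)$ remainder uniform. Finally, $E_c(\tau_0) \ge 0$ is obtained by setting $x_0 = 0$, $w_d \equiv 0$ and taking $w_c$ to be a nonnegative approximate delta concentrated near $\tau_0$ acting on a single coordinate; the variation of constants formula, combined with $\Phi$ nonnegative and a limiting argument, isolates the column of $E_c(\tau_0)$ and forces it to be nonnegative.

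The main obstacle is the passage from state-trajectory nonnegativity to the pointwise conditions on the time-varying matrices $A(\tau)$ and $E_c(\tau)$ at every $\tau \ge 0$. This requires both the continuity hypothesis on these matrix-valued functions and the freedom to choose $T_0$ arbitrarily large, so that any prescribed $\tau_0$ lies inside the first inter-impulse interval. Once these pointwise conditions are established, the two directions fit together and the proposition is obtained as the natural impulsive combination of the classical Metzler characterization for continuous-time positive systems and the nonnegativity characterization for discrete-time positive maps.
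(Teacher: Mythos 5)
The paper states this proposition without any proof, so there is nothing to compare against directly; the result is invoked as a standard fact. Your sufficiency direction, (b) $\Rightarrow$ (a), is correct and standard: induction over the impulse index, nonnegativity of the transition matrix generated by a Metzler $A(\cdot)$, variation of constants, and nonnegativity of $J$, $E_d$ and of the input terms. The extraction of $J\ge0$ and $E_d\ge0$ from the jump at $t_0$, and of $E_c(\tau_0)\ge0$ via an approximate delta concentrated at $\tau_0$ followed by $t\downarrow\tau_0$ (so that $\Phi(t,\tau_0)\to I$), are also sound.

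The gap is in your necessity argument for the Metzler property of $A(\tau_0)$ at $\tau_0>0$. You write that $\Phi(\tau_0+h,\tau_0)e_i=x(\tau_0+h)$ ``when one initializes the flow from $e_i$ at time $\tau_0$,'' but this is not an admissible experiment: the system only accepts initial data at $t_0=0$, and every impulse resets the clock, so the state at clock time $\tau_0$ is confined to $\Phi(\tau_0,0)\bigl(Jx_0+E_dw_d(0)\bigr)$ plus the input contribution. This reachable set need not contain the coordinate vectors $e_i$, and without states on the boundary of the orthant at time $\tau_0$ you cannot test the off-diagonal entries of $A(\tau_0)$. The obstruction is not cosmetic: if $A(\tau)$ is Metzler and mixing on $[0,1]$ so that $\Phi(1,0)$ is entrywise strictly positive, one may perturb $A(\tau)$ to have a small negative off-diagonal entry for $\tau$ slightly above $1$ while $\Phi(\tau,0)=\Phi(\tau,1)\Phi(1,0)$ remains nonnegative, hence all trajectories remain nonnegative; statement (a) then holds while (b) fails. (Degenerate cases such as $J=0$, $E_d=0$ break necessity even more crudely.) So (a) $\Rightarrow$ (b) as literally stated requires either an additional reachability hypothesis or a reinterpretation of ``state positive'' as positivity from every intermediate clock time (equivalently $\Phi(\tau,s)\ge0$ for all $\tau\ge s$). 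This is arguably a looseness in the proposition itself, but your proof asserts the step rather than bridging or flagging it, and as written the argument does not go through.
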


\subsection{Stability under range dwell-time}

The following result provides a sufficient condition for the stability of the system \eqref{eq:mainsyst2} under a range dwell-time constraint; i.e. $T_k\in[T_{min},T_{max}]$, $k\ge0$, for some given $0<T_{min}\le T_{max}<\infty$. It is an extension of the range dwell-time result derived in \cite{Briat:16c}.
\begin{theorem}\label{th:rangeDT}
Let us consider the system \eqref{eq:mainsyst2} with $w_c\equiv0$, $w_d\equiv0$ and assume that it is state positive; i.e. $A(\tau)$ is Metzler for all $\tau\in[0,T_{max}]$ where $0<T_{min}\le T_{max}<\infty$ are given real numbers. Then, the following statements are equivalent:
\begin{enumerate}[(a)]
  \item There exists a vector $\lambda\in\mathbb{R}_{>0}^n$ such that
  \begin{equation}\label{eq:dksmldksdksmdk}
      \lambda^T\left(J\Phi(\theta)-I_n\right)<0
  \end{equation}
  holds for all $\theta\in[T_{min},T_{max}]$ where
  \begin{equation}
    \dot{\Phi}(s)=A(s)\Phi(s),\ \Phi(0)=I_n, s\in[0,T_{max}].
  \end{equation}
  \item There exist a differentiable vector-valued $\zeta:[0,\bar T]\mapsto\mathbb{R}^n$, $\zeta(0)>0$, and a scalar $\eps>0$ such that the conditions
 \begin{equation}
          \begin{array}{rcl}
            \dot\zeta(\tau)^T+\zeta(\tau)^TA(\tau)&\le&0\\
           \zeta(0)^T J-\zeta(\theta)^T+\eps\mathds{1}^T&\le&0
          \end{array}
        \end{equation}
hold for all $\tau\in[0,\bar T]$ and all $\theta\in[T_{min},T_{max}]$.
\end{enumerate}
Moreover, when one of the above statements holds, then the positive impulsive system \eqref{eq:mainsyst2} is asymptotically stable under range dwell-time $[T_{min},T_{max}]$.\hfill\mendth
\end{theorem}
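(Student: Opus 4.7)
The strategy is to exploit the componentwise nonnegativity of the state-transition matrix $\Phi(\tau)$, which is guaranteed by $A(\tau)$ being Metzler on $[0,T_{\max}]$, so as to relate the two conditions via right-multiplication by $\Phi(\theta)$. Asymptotic stability will then follow from either statement through a clock-dependent linear Lyapunov function.

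For the direction (b) $\Rightarrow$ (a), I would set $\lambda := \zeta(0)$ and right-multiply the first inequality by $\Phi(\tau) \geq 0$ to obtain $\tfrac{d}{d\tau}(\zeta(\tau)^T\Phi(\tau)) \leq 0$, so that $\zeta(\theta)^T\Phi(\theta) \leq \lambda^T$ componentwise for all $\theta \in [0,\bar T]$. Right-multiplying the second inequality by $\Phi(\theta)$ then yields $\lambda^T J\Phi(\theta) \leq \lambda^T - \eps\mathds{1}^T\Phi(\theta)$, and since $\Phi(\theta)$ is invertible and nonnegative, every column of $\Phi(\theta)$ contains a strictly positive entry and hence $\mathds{1}^T\Phi(\theta) > 0$ componentwise, so (a) follows with strict inequality. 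The converse direction (a) $\Rightarrow$ (b) is more delicate: the natural candidate $\zeta(\tau)^T := \lambda^T\Phi(\tau)^{-1}$ renders the first inequality an equality and gives $\zeta(0)=\lambda > 0$, but the second inequality then translates via the identity $\Phi(\theta)^{-1} - J = (I - J\Phi(\theta))\Phi(\theta)^{-1}$ into $\lambda^T(I - J\Phi(\theta))\Phi(\theta)^{-1} \geq \eps\mathds{1}^T$, which does not follow directly from (a) since $\Phi(\theta)^{-1}$ may have negative entries. I would therefore parameterize admissible $\zeta$ by introducing nonnegative slack $g(\tau) \geq 0$ in the first inequality, writing $\dot\zeta^T + \zeta^T A = -g^T$ with $\zeta(0) = \lambda$, so that the general solution is $\zeta(\tau)^T = (\lambda^T - \int_0^\tau g(s)^T\Phi(s)\,ds)\Phi(\tau)^{-1}$, and then choose $g$ so as to shift $\zeta(\theta)$ and meet the jump condition uniformly on $[T_{\min},T_{\max}]$. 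The existence of such a $g$ and an $\eps > 0$ will follow from the strict inequality in (a) combined with the compactness of $[T_{\min},T_{\max}]$.

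Finally, I would establish asymptotic stability from (b) using $V(x,\tau) := \zeta(\tau)^T x$ as a clock-dependent linear Lyapunov function. For positive trajectories, $V$ is nonincreasing on each continuous interval between jumps by the first inequality, and at each jump $V$ drops by at least $\eps\mathds{1}^T x(t_{k+1})$ by the second inequality; iterating yields geometric decay of $V$ along the jump sequence, which together with continuity on the intervals gives $x(t) \to 0$ as $t \to \infty$. The main obstacle is the construction of $\zeta$ in the direction (a) $\Rightarrow$ (b), where the adjoint-flow choice alone is insufficient and one must carefully introduce slack through a nontrivial $g$ in order to meet the componentwise lower bound at the jump times over the whole range $[T_{\min},T_{\max}]$.
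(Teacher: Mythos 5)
First, a point of reference: the paper does not actually prove Theorem \ref{th:rangeDT} --- it defers entirely to the corresponding LTI result in \cite{Briat:16c} and to the state-transition-matrix arguments of \cite{Briat:15f}. So your attempt must stand on its own. Your direction (b) $\Rightarrow$ (a) does: right-multiplying both inequalities by $\Phi(\cdot)\ge0$ (nonnegativity of the transition matrix being exactly what the Metzler hypothesis delivers), using $\tfrac{d}{d\tau}\bigl(\zeta(\tau)^T\Phi(\tau)\bigr)=(\dot\zeta(\tau)^T+\zeta(\tau)^TA(\tau))\Phi(\tau)$, and noting that $\mathds{1}^T\Phi(\theta)>0$ because $\Phi(\theta)$ is nonnegative and invertible, is correct and complete. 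The stability argument is also essentially right, except that ``iterating yields geometric decay'' silently uses a lower bound of the form $\eps\mathds{1}^Tx(t_{k+1})\ge\delta\, V(x(t_k^+),0)$ with $\delta>0$ uniform over $[T_{min},T_{max}]$; this needs the uniform boundedness and invertibility of $\Phi(\theta)$ on that compact set (or, more simply, run the decay estimate directly from statement (a)).

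The genuine gap is (a) $\Rightarrow$ (b), which is the entire content of the equivalence. You correctly diagnose why $\zeta(\tau)^T=\lambda^T\Phi(\tau)^{-1}$ fails --- the needed inequality $\lambda^T(I-J\Phi(\theta))\Phi(\theta)^{-1}\ge\eps\mathds{1}^T$ cannot be extracted from \eqref{eq:dksmldksdksmdk} because $\Phi(\theta)^{-1}$ is not nonnegative --- but the proposed repair does not escape this obstruction. Introducing slack $g\ge0$ gives $\zeta(\tau)^T=\bigl(\lambda^T-\int_0^\tau g(s)^T\Phi(s)\,ds\bigr)\Phi(\tau)^{-1}$, which only perturbs $\zeta(\tau)^T\Phi(\tau)$ downward componentwise; recovering $\zeta(\theta)^T$ from $\zeta(\theta)^T\Phi(\theta)$ again passes through the sign-indefinite matrix $\Phi(\theta)^{-1}$, so you have no componentwise control over how the slack moves $\zeta(\theta)$ --- the same difficulty you set out to fix. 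The closing sentence ``the existence of such a $g$ and an $\eps>0$ will follow from the strict inequality in (a) combined with compactness'' is therefore an assertion, not an argument: strictness and compactness give you a uniform margin $\eps_0$ in \eqref{eq:dksmldksdksmdk}, but no mechanism for converting it into the componentwise lower bound $\zeta(\theta)\ge J^T\zeta(0)+\eps\mathds{1}$. An explicit construction of $\zeta$ is required here, and it is precisely the step the paper outsources to \cite{Briat:16c} and \cite{Briat:15f}; note also that the quadratic-case construction of \cite{Briat:13d}, which conjugates by $\Phi(\tau)^{-1}$ on both sides, does not transfer, since congruence preserves definiteness but one-sided multiplication by $\Phi(\tau)^{-1}$ does not preserve componentwise nonnegativity.
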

\begin{proof}
  The proof of this result is similar to the one of the corresponding result for LTI positive system in \cite{Briat:16c}. However, since the system is not time-invariant, we have to use state-transition matrices instead of matrix exponentials to prove the equivalence as in \cite{Briat:15f}. Due to space limitations, this result is not proved here.
\end{proof}

The condition stated in the statement (b) in the above result forms an infinite-dimensional linear program that cannot be solved per se. However, relaxed conditions taking the form of finite-dimensional semidefinite/linear programs can be obtained by restricting $\zeta(\tau)$ to be a vector-valued polynomial. Note that in this case, the derivative $\dot\zeta(\tau)$, which is also polynomial, is immediate to get by differentiation. This procedure will be explained in more details in Section \ref{sec:computational}.


\subsection{Stability under minimum dwell-time}

The following result provides a sufficient condition for the stability of the system \eqref{eq:mainsyst2} under a minimum dwell-time constraint; i.e. $T_k\ge\bar T$, $k\ge0$, for some given $\bar T>0$. It is an extension of the minimum dwell-time result derived in \cite{Briat:16c}:
\begin{theorem}\label{th:minDT}
Let us consider the system  \eqref{eq:mainsyst2} with $w_c\equiv0$, $w_d\equiv0$, $A(\tau)=A(\bar{T})$ for all $\tau\ge\bar{T}>0$, where $\bar T>0$ is given, and assume that it is state positive. Then, the following statements are equivalent:
\begin{enumerate}[(a)]
  \item There exists a vector $\lambda\in\mathbb{R}_{>0}^n$ such that
  \begin{equation}
      \lambda^TA(\bar{T})<0
  \end{equation}
  and
    \begin{equation}
      \lambda^T\left(J\Phi(\bar{T})-I_n\right)<0
  \end{equation}
  hold  where
  \begin{equation}
    \dot{\Phi}(s)=A(s)\Phi(s),\ \Phi(0)=I_n, s\in[0,\bar{T}].
  \end{equation}
  \item There exist a differentiable vector-valued $\zeta:[0,\bar T]\mapsto\mathbb{R}^n$, $\zeta(\bar T)>0$, and a scalar $\eps>0$ such that the conditions
 \begin{equation}
          \begin{array}{rcl}
           \zeta(\bar T)^TA(\bar{T})&<&0\\
            -\dot\zeta(\tau)^T+\zeta(\tau)^TA(\tau)&\le&0\\
           \zeta(\bar T)^T J-\zeta(0)^T+\eps\mathds{1}^T&\le&0
          \end{array}
        \end{equation}
hold for all $\tau\in[0,\bar T]$.
\end{enumerate}
Moreover, when one of the above statements holds, then the positive impulsive system \eqref{eq:mainsyst2} is asymptotically stable under minimum dwell-time $\bar T$.\hfill\mendth
\end{theorem}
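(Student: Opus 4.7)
The plan is to adapt the proof of Theorem~\ref{th:rangeDT} and of the LTI minimum-dwell-time result of \cite{Briat:16c}, replacing matrix exponentials by the state-transition matrix $\Phi(\cdot)$ in order to accommodate the time-varying $A$ on $[0,\bar T]$. The natural bridge between (a) and (b) will be the identification $\lambda=\zeta(\bar T)$. Going (b)$\Rightarrow$(a), the first line of (b) is literally (a)(i). To obtain (a)(ii), I would rearrange the jump inequality (b)(3) as $\zeta(\bar T)^T J\le\zeta(0)^T-\eps\mathds{1}^T$, right-multiply by the nonnegative matrix $\Phi(\bar T)$, and combine with a propagation estimate of the form $\zeta(0)^T\Phi(\bar T)\le\zeta(\bar T)^T$ together with $\mathds{1}^T\Phi(\bar T)>0$, producing the strict inequality $\zeta(\bar T)^T J\Phi(\bar T)<\zeta(\bar T)^T$. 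The propagation estimate itself should come out of (b)(2), rewritten as $\dot\zeta\ge A(\tau)^T\zeta$, via a Metzler comparison argument: $\zeta$ dominates the adjoint trajectory $\dot z=A(\tau)^T z$ started at $z(0)=\zeta(0)$, and the nonnegativity of the adjoint state-transition matrix delivers the desired bound after pairing with $\Phi(\bar T)$.

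For (a)$\Rightarrow$(b), I would construct $\zeta$ explicitly on $[0,\bar T]$ by integrating the saturated dynamics $\dot\zeta=A(\tau)^T\zeta$ backwards from $\zeta(\bar T):=\lambda$, so that (b)(1) and (b)(2) hold automatically; (b)(3) then follows by reversing the propagation estimate and exploiting the strictness of $\lambda^T J\Phi(\bar T)<\lambda^T$ in (a)(ii) to allocate slack for a positive $\eps$. For the asymptotic stability conclusion, I plan to use the clock-dependent linear copositive Lyapunov function $V(x,\tau):=\zeta(\tau)^T x$ on $[0,\bar T]$, extended by $V(x,\tau):=\zeta(\bar T)^T x$ for $\tau\ge\bar T$. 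On the tail of each intersample the dynamics are LTI and (b)(1) forces strict decrease of $V$ along the flow; over the short window $[0,\bar T]$ the evolution of $V$ is controlled by (b)(2); and at each impulse (b)(3) yields a strict decrement of at least $\eps\mathds{1}^T x$. Summing these contributions over cycles produces asymptotic convergence of $x$ along any admissible impulse sequence with $T_k\ge\bar T$.

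The step I expect to be the real obstacle is the propagation estimate in the time-varying case, because the naive integrating factor $\Phi(\tau)^{-T}$ is not in general nonnegative and the primal and adjoint state-transition matrices are not simply transposes of one another, so a direct $\zeta(\tau)^T\Phi(\tau)^{-1}$ monotonicity argument fails. My resolution would be to run the Metzler comparison against the adjoint transition $\Psi$ solving $\dot\Psi=A(\tau)^T\Psi$ and then transfer the inequality back to $\Phi(\bar T)$ via the nonnegativity of $J$ and $\Phi(\bar T)$ and the scalar pairing with $\lambda$ that appears in (a)(ii); alternatively, one can specialise the time-varying impulsive-systems machinery of \cite{Briat:15f} to the positive setting at hand. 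Once this bridging step is in place, everything else in the argument is routine.
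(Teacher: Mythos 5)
Your high-level plan --- identifying $\lambda=\zeta(\bar T)$, extracting a propagation estimate from the flow inequality via a Metzler comparison, right-multiplying the jump inequality by the nonnegative matrix $\Phi(\bar T)$, and measuring the Lyapunov decrease over full inter-impulse cycles --- is exactly the route the paper gestures at (the proof is omitted and deferred to the LTI argument of \cite{Briat:16c} with transition matrices substituted as in \cite{Briat:15f}). But the step you yourself flag as the obstacle is a genuine gap, and your proposed resolution does not close it. Feeding $\dot\zeta(\tau)\ge A(\tau)^T\zeta(\tau)$ into the comparison lemma for the Metzler system $\dot z=A(\tau)^Tz$ gives $\zeta(\bar T)\ge\Theta(\bar T,0)\,\zeta(0)$, where $\Theta$ is the transition matrix of that adjoint flow; transposing yields $\zeta(0)^TM(\bar T)\le\zeta(\bar T)^T$ with $M(\bar T):=\Theta(\bar T,0)^T$, and $M$ solves $\dot M(\tau)=M(\tau)A(\tau)$, $M(0)=I_n$ --- the time-ordered product with \emph{earlier} times on the left --- whereas $\Phi$ solves $\dot\Phi(\tau)=A(\tau)\Phi(\tau)$, the product with earlier times on the right. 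These coincide only when the matrices $A(\tau)$ commute (e.g.\ $A$ constant), which is precisely not the situation the paper needs, since in Theorem \ref{th:2} the flow matrix is $A-\tilde L_c(\tau)C_c$. So your comparison delivers $\zeta(\bar T)^TJM(\bar T)<\zeta(\bar T)^T$ rather than statement (a), and no amount of multiplying componentwise inequalities by the nonnegative matrices $J$ and $\Phi(\bar T)$ ``transfers'' one into the other, because neither $M(\bar T)\le\Phi(\bar T)$ nor the reverse holds in general. Note why this problem is invisible in the range dwell-time case: there $\tfrac{d}{d\tau}\left[\zeta(\tau)^T\Phi(\tau)\right]=\left(\dot\zeta(\tau)^T+\zeta(\tau)^TA(\tau)\right)\Phi(\tau)$ factors with $\Phi$ on the right and the adjoint flow never appears; the analogous candidate $\zeta(\tau)^T\Phi(\tau)^{-1}$ here differentiates to $\dot\zeta^T\Phi^{-1}-\zeta^T\Phi^{-1}A$, which neither factors nor has a nonnegative weight. (In the quadratic setting of \cite{Briat:15f} the congruence $\Phi^{-T}P\Phi^{-1}$ rescues the argument because congruence preserves the semidefinite order for arbitrary $\Phi$; the copositive order enjoys no such luxury, which is exactly why the minimum dwell-time LP is delicate.)

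The converse direction and the stability claim inherit the same defect plus one independent one. If you integrate $\dot\zeta=A(\tau)^T\zeta$ backwards from $\zeta(\bar T)=\lambda$, then $\zeta(0)^T=\lambda^TM(\bar T)^{-1}$ and the jump condition you must verify is $\lambda^TJ\le\lambda^TM(\bar T)^{-1}-\eps\mathds{1}^T$; even in the LTI case, where $M(\bar T)=\Phi(\bar T)=e^{A\bar T}$, this does not follow from $\lambda^TJ\Phi(\bar T)\le\lambda^T-\delta\mathds{1}^T$ by ``reversing the propagation estimate'', because that reversal amounts to right-multiplying a componentwise inequality by $\Phi(\bar T)^{-1}$, which is not nonnegative. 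The workable construction instead defines $\zeta(0)$ from the jump condition, e.g.\ $\zeta(0)^T:=\lambda^TJ+\eps\mathds{1}^T$, checks the compatibility $\zeta(0)^T\Phi(\bar T)<\lambda^T$ (which \emph{does} follow from (a) by multiplying in the good direction, for $\eps$ small), and then interpolates a differentiable $\zeta$ satisfying the differential \emph{inequality} between these endpoints --- possible exactly because (b) does not demand equality, though one must perturb to keep the slack vector strictly positive. Finally, in your stability argument the phrase ``over the short window $[0,\bar T]$ the evolution of $V$ is controlled by (b)(2)'' conceals that $V$ may \emph{increase} there; only the tail condition $\zeta(\bar T)^TA(\bar T)<0$ gives pointwise decrease, and the net decrease per cycle again rests entirely on the missing propagation estimate. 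In summary: your architecture is correct and closes verbatim when $A(\tau)\equiv A$, but for clock-varying $A(\tau)$ you must either restate/prove the flow condition with the clock reversed (i.e.\ with $A(\bar T-\tau)$ paired against $\zeta(\tau)$) so that the left fundamental matrix $\Phi(\bar T)$ actually appears, or supply an argument for interchanging the two time-ordered products; as written, the bridge is asserted but not built.
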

\begin{proof}
For the same reasons as for Theorem \ref{th:1}, the proof is omitted.
\end{proof}

\subsection{Extensions to systems with inputs}

It seems important to clarify the fact that under the assumption of the asymptotic stability of the system without input, we have that the state of the system remains bounded provided that the inputs are bounded. Such a result is stated below where only the range dwell-time case is considered for brevity:
%
%
%
%
\begin{proposition}\label{prop:input}
  Assume that the linear impulsive system \eqref{eq:mainsyst2} is state positive and that the conditions of Theorem \ref{th:1} hold for some $0<T_{min}\le T_{max}<\infty$. Then, the system \eqref{eq:mainsyst2}  is input-to-state stable under range dwell-time $[T_{min},T_{max}]$.
\end{proposition}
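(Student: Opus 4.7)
The plan is to leverage the linear copositive Lyapunov certificate supplied by Theorem~\ref{th:rangeDT}(a)---the vector $\lambda\in\mathbb{R}_{>0}^n$ satisfying $\lambda^T(J\Phi(\theta)-I_n)<0$ on $[T_{min},T_{max}]$---and to treat the inputs $w_c,w_d$ as bounded perturbations of an otherwise autonomous contraction. I would split the argument into a discrete-time step on the sampled sequence $z_k:=x(t_k^+)$, followed by a routine intersample bound.

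State positivity guarantees that $x(t)\ge0$ for every $t\ge0$, so $\lambda^T x(t)$ and $\mathds{1}^T x(t)$ are norm-equivalent on the trajectory. Solving the continuous flow over one dwell interval of length $T_k\in[T_{min},T_{max}]$ and then applying the jump would give
\begin{equation*}
z_{k+1}=J\Phi(T_k)z_k+J\!\int_0^{T_k}\!\Psi(T_k,s)E_c(s)w_c(t_k+s)\,ds+E_dw_d(k+1),
\end{equation*}
where $\Psi(\tau,s)$ denotes the continuous state-transition for $\dot{x}=A(\cdot)x$ between local times $s$ and $\tau$. Since $A(\cdot)$ is Metzler, both $\Phi$ and $\Psi$ are nonnegative and, by continuity, uniformly bounded on the compact set $\{0\le s\le\tau\le T_{max}\}$; by assumption the same is true of $E_c$.

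Continuity of $\theta\mapsto\lambda^T(J\Phi(\theta)-I_n)$ on the compact interval $[T_{min},T_{max}]$ upgrades the strict inequality of Theorem~\ref{th:rangeDT}(a) to $\lambda^T(J\Phi(\theta)-I_n)\le -\mu\,\mathds{1}^T$ for some $\mu>0$ independent of $\theta$. Left-multiplying the discrete update by $\lambda^T$, using the nonnegativity of $z_k,w_c,w_d$ and majorising the input integrals by their uniform bounds then yields
\begin{equation*}
\lambda^T z_{k+1}\le \lambda^T z_k-\mu\,\mathds{1}^T z_k+c_c\|w_c\|_\infty+c_d\|w_d\|_\infty,
\end{equation*}
for constants $c_c,c_d>0$ depending only on $\lambda,J,E_c,E_d,T_{max}$. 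Equivalence of $\lambda^T z_k$ and $\mathds{1}^T z_k$ on $\mathbb{R}_{\ge0}^n$ turns this into a standard discrete-time ISS-Lyapunov inequality, delivering $\|z_k\|\le\beta(\|z_0\|,k)+\gamma(\|w_c\|_\infty+\|w_d\|_\infty)$ with $\beta\in\mathcal{KL}$ and $\gamma\in\mathcal{K}$.

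Lifting this to continuous time is then immediate, since $x(t_k+\tau)=\Phi(\tau)z_k+\int_0^\tau\Psi(\tau,s)E_c(s)w_c(t_k+s)\,ds$ with uniformly bounded kernels on the compact set above, so any bound on $\|z_k\|$ and $\|w_c\|_\infty$ propagates to a bound on $\|x(t)\|$ of the form required by input-to-state stability. The only delicate point I expect is the uniformity of $\mu,c_c,c_d$ over all admissible dwell-time sequences $\{T_k\}\subset[T_{min},T_{max}]$; this reduces entirely to compactness of $[T_{min},T_{max}]$ and continuity of $\Phi$, and so is not a genuine obstacle. Everything else is routine bookkeeping for a positive linear system.
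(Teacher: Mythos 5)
Your proposal is correct and follows essentially the same route as the paper: the linear copositive function $V=\lambda^Tx$, the one-step recursion on the sampled sequence $x(t_k^+)$ using the strict (hence, by compactness, uniform) contraction $\lambda^T(J\Phi(\theta)-I_n)$, majorisation of the input terms by uniform bounds on $\Psi$ and $E_c$, and a final intersample lift (the paper invokes Bellman--Gr\"{o}nwall where you use the explicit variation-of-constants bound, which is an immaterial difference). Your restatement of the contraction as $-\mu\mathds{1}^T$ followed by norm equivalence of $\lambda^Tz$ and $\mathds{1}^Tz$ on the nonnegative orthant is just the paper's $(\epsilon-1)\lambda^T$ bound in different clothing.
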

\begin{proof}
Assume that the conditions of statement (a) of the result on range dwell-time (Theorem \ref{th:1}) are met. Then, there exists an $\epsilon\in(0,1)$ such that $\lambda^T\left(Je^{AT_k}-I_n\right)\le(\epsilon-1)\lambda^T$ for all $T_k\in[T_{min},T_{max}]$. Letting then $V(x)=\lambda^Tx$ and $\Psi(t,s)=\Phi(t)\Phi(s)^{-1}$, we get from \eqref{eq:mainsyst} that
\begin{equation}
\begin{array}{rcl}
    V(t_{k+1}^+)-V(t_{k}^+)&=&\lambda^T\left(J\Phi(T_k)-I_n\right)x(t_k^+)\\
    &&+\lambda^TE_dw_d(k+1)\\
    &&+\lambda^T\textstyle\int_0^{T_k}J\Psi(T_k,s)E_c(s)w(t_k+s)ds\\
    &\le&(\epsilon-1) V(t_{k}^+)+\mu
\end{array}
\end{equation}
where $\mu:=\lambda^T\left(E_d\mathds{1}||w_d||_{\ell_\infty}+JM\mathds{1}||w_c||_{L_\infty}\right)$ and $M$ is such that
\begin{equation}
\int_0^{T_{k}}\Psi(T_k,s)E_c(s)ds\le M
\end{equation}
for all $T_k\in[T_{min},T_{max}]$. This then leads to
\begin{equation}
  V(t_k^+)\le(1-\epsilon)^kV(0)+\dfrac{1-\epsilon^k}{1-\epsilon}\mu
\end{equation}
and hence that
\begin{equation}
  \limsup_{k\to\infty}V(t_k^+)\le\dfrac{\mu}{1-\epsilon}<\infty.
\end{equation}
The boundedness of $V(x((t))$ for all $t\ge0$, then simply follows from an application of the Bellman-Gr\"{o}nwall lemma. The proof is completed.
%
\end{proof}

\section{Main results}\label{sec:main}


Let us consider the following class of linear impulsive system
\begin{equation}\label{eq:mainsyst}
\begin{array}{rcl}
  \dot{x}(t)&=&Ax(t)+E_cw_c(t),\ t\notin\{t_k\}_{k\in\mathbb{N}}\\
  x(t_k^+)&=&Jx(t_k)+E_dw_d(k),\ k\in\mathbb{N}\\
  y_c(t)&=&C_{c}x(t)+F_cw_c(t),\ t\notin\{t_k\}_{k\in\mathbb{N}}\\
  y_d(k)&=&C_{d}x(t_k)+F_dw_d(k),\ k\in\mathbb{N}\\
  x(t_0)&=&x_0,\ t_0=0
\end{array}
\end{equation}
where $x,x_0\in\mathbb{R}_{\ge0}^n$, $w_c\in\mathbb{R}_{\ge0}^{p_c}$, $w_d\in\mathbb{R}_{\ge0}^{p_d}$, $y_c\in\mathbb{R}_{\ge0}^{q_c}$ and $y_d\in\mathbb{R}_{\ge0}^{q_d}$ are the state of the system, the initial condition, the continuous-time exogenous input, the discrete-time exogenous input, the continuous-time measured output and the discrete-time measured output, respectively. The sequence of impulse instants $\{t_k\}_{k\in\mathbb{Z}_{\ge0}}$ is assumed to satisfy the same properties as for the system \eqref{eq:mainsyst2}. The input signals are all assumed to be bounded functions and that some bounds are known; i.e. we have $w_c^-(t)\le w_c(t)\le w_c^+(t)$ and $w_d^-(k)\le w_d(k)\le w_d^+(k)$ for all $t\ge0$ and $k\ge0$ and for some known $w_c^-(t), w_c^+(t),w_d^-(k),w_d^+(k)$.

\subsection{Proposed interval observer}

We are interested in finding an interval-observer of the form
\begin{equation}\label{eq:obs}
\begin{array}{lcl}
      \dot{x}^\bullet(t)&=&Ax^\bullet(t)+E_c w_c^\bullet(t)\\
&&\quad+L_c(t)(y_c(t)-C_c x^\bullet(t)-F_c w_c^\bullet(t))\\
      x^\bullet(t_k^+)&=&Jx^\bullet(t_k)+E_d w_d^\bullet(t)\\
&&\quad+L_d(y_d(k)-C_d x^\bullet(t_k)-F_d w_d^\bullet(t))\\
      x^\bullet(0)&=&x_0^\bullet
\end{array}
\end{equation}
where $\bullet\in\{-,+\}$. Above, the observer with the superscript ``$+$'' is meant to estimate an upper-bound on the state value whereas the observer with the superscript ``-'' is meant to estimate a lower-bound, i.e. $x^-(t)\le x(t)\le x^+(t)$ for all $t\ge0$ provided that $x_0^-\le x_0\le x_0^+$, $w_c^-(t)\le w_c(t)\le w_c^+(t)$ and $w_d^-(k)\le w_d(k)\le w_d^+(k)$.

The errors dynamics $e^+(t):=x^+(t)-x(t)$ and $e^-(t):=x(t)-x^-(t)$ are then described by
\begin{equation}\label{eq:error}
\begin{array}{rcl}
    \dot{e}^\bullet(t)&=&(A-L_c(t)C_c)e^\bullet(t)+(E_c-L_c(t) F_c)\delta_c^\bullet(t)\\ 
    e^\bullet(t_k^+)&=&(J-L_d C_d)e^\bullet(t_k)+(E_d-L_d F_d)\delta_d^\bullet(k)\\ 
\end{array}
\end{equation}
where $\bullet\in\{-,+\}$, $\delta_c^+(t):=w_c^+(t)-w_c(t)\in\mathbb{R}_{\ge0}^{p_c}$, $\delta_c^-(t):=w_c(t)-w_c^-(t)\in\mathbb{R}_{\ge0}^{p_c}$, $\delta_d^+(k):=w_d^+(k)-w_d(k)\in\mathbb{R}_{\ge0}^{p_d}$ and $\delta_d^-(k):=w_d(k)-w_d^-(k)\in\mathbb{R}_{\ge0}^{p_d}$. Note that both errors have exactly the same dynamics and, consequently, it is unnecessary here to consider different observer gains. Note that this would not be the case if the observers were coupled in a non-symmetric way.



\subsection{Range dwell-time result}

In the range-dwell -time case, the time-varying gain $L_c(t)$ in \eqref{eq:obs} is defined as follows
\begin{equation}\label{eq:L1}
  L_c(t)=\tilde{L}_c(t-t_k),\ t\in(t_k,t_{k+1}]
\end{equation}
where $\tilde{L}_c:[0,T_{max}]\mapsto\mathbb{R}^{n\times q_c}$ is a matrix-valued function to be determined. The rationale for considering such structure is to allow for the derivation of convex design conditions. The observation problem is defined, in this case, as follows:
\begin{problem}\label{problem1}
Find an interval observer of the form \eqref{eq:obs} (i.e. a matrix-valued function $L_c(\cdot)$ of the form \eqref{eq:L1} and a matrix $L_d\in\mathbb{R}^{n\times q_d}$) such that the error dynamics \eqref{eq:error} is
  \begin{enumerate}[(a)]
    \item state-positive, that is
    \begin{itemize}
      \item $A-L_c(\tau)  C_c$ is Metzler for all $\tau\in[0,T_{max}]$,
      \item $E_c-L_c(\tau)  F_c $ is nonnegative for all $\tau\in[0,T_{max}]$,
      \item $J-L_dC_d$ and $E_d-L_dF_d$ are nonnegative; and
    \end{itemize}
    \item asymptotically stable under range dwell-time\\ $[T_{min},T_{max}]$ when $w_c\equiv0$ and $w_d\equiv0$.
  \end{enumerate}
\end{problem}
Note that by virtue of Proposition \ref{prop:input}, the property (b) implies that the error dynamics (15) will be stable and positive for any bounded  $w_c$ and $w_d$ satisfying the conditions below \eqref{eq:error}.

The following result provides a sufficient condition for the solvability of Problem \ref{problem1}:
\begin{theorem}\label{th:1}
Assume that there exist a differentiable matrix-valued function $X:[0,T_{max}]\mapsto\mathbb{D}^n$, $X(0)\succ0$, a matrix-valued function $U_c:[0,T_{max}]\mapsto\mathbb{R}^{n\times q_c}$, a matrix $U_d\in\mathbb{R}^{n\times q_d}$ and scalars $\eps,\alpha>0$ such that the conditions
\begin{subequations}\label{eq:th1a}
\begin{alignat}{4}
            X(\tau)A-U_c(\tau)C_c+\alpha I_n&\ge0\label{eq:th1:1}\\
           X(0) J-U_d C_d&\ge0\label{eq:th1:2}\\
           X(\tau)E_c-U_c(\tau)F_c&\ge0\label{eq:th1:3}\\
            X(0)E_d-U_d F_d&\ge0\label{eq:th1:4}
  \end{alignat}
\end{subequations}
           and
 \begin{subequations}\label{eq:th1b}
\begin{alignat}{4}
           \mathds{1}^T_n\left[\dot{X}(\tau)+X(\tau)A-U_c(\tau)C_c\right]&\le0\label{eq:th1:5}\\
            \mathds{1}^T_n\left[X(0) J-U_d C_d-X(\theta)+\eps I\right]&\le0\label{eq:th1:6}
             \end{alignat}
\end{subequations}
%
%
        %
hold for all $\tau\in[0,T_{max}]$ and all $\theta\in[T_{min},T_{max}]$. Then, there exists an interval observer of the form \eqref{eq:obs}-\eqref{eq:L1} that solves Problem \ref{problem1} and suitable observer gains are given by
\begin{equation}\label{eq:formula1}
  \tilde{L}_c(\tau)= X(\tau)^{-1}U_c(\tau)\quad \textnormal{and}\quad L_d=X(0)^{-1}U_d.
\end{equation}
\end{theorem}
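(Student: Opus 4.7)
The plan is to reduce Theorem \ref{th:1} to an application of the range dwell-time stability result (Theorem \ref{th:rangeDT}) to the error dynamics \eqref{eq:error}, via a standard linearizing change of variables made possible by the diagonal structure of $X(\tau)$. Because $X(\tau)\in\mathbb{D}^n$ with $X(0)\succ0$, the matrices $X(\tau)$ are invertible for all $\tau\in[0,T_{max}]$ (by continuity of a diagonal certificate starting strictly positive), and one defines the observer gains exactly as in \eqref{eq:formula1}, namely $\tilde{L}_c(\tau)=X(\tau)^{-1}U_c(\tau)$ and $L_d=X(0)^{-1}U_d$. The entire argument consists in checking that conditions \eqref{eq:th1a}--\eqref{eq:th1b} then translate, through left multiplication by $X(\tau)^{-1}$ or by $\mathds{1}_n^T$, into the positivity and stability requirements of Problem \ref{problem1}.

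For part (a) of Problem \ref{problem1}, I would argue as follows. Condition \eqref{eq:th1:1} asserts that $X(\tau)A-U_c(\tau)C_c+\alpha I_n$ is entrywise nonnegative; since the slack $\alpha I_n$ only affects the diagonal, every off-diagonal entry of $X(\tau)A-U_c(\tau)C_c$ is nonnegative, so this matrix is Metzler. Premultiplying by the positive diagonal matrix $X(\tau)^{-1}$ merely rescales each row by a strictly positive scalar and therefore preserves the Metzler property, yielding Metzlerness of $A-\tilde{L}_c(\tau)C_c$. The same premultiplication applied to \eqref{eq:th1:3}, \eqref{eq:th1:2}, \eqref{eq:th1:4} gives, respectively, $E_c-\tilde{L}_c(\tau)F_c\ge0$, $J-L_dC_d\ge0$, and $E_d-L_dF_d\ge0$, so the error dynamics \eqref{eq:error} is state positive.

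For part (b), I would invoke statement (b) of Theorem \ref{th:rangeDT} applied to \eqref{eq:error} with the unforced continuous matrix $A_e(\tau):=A-\tilde{L}_c(\tau)C_c$ and jump matrix $J_e:=J-L_dC_d$, using the candidate certificate
\begin{equation*}
\zeta(\tau):=X(\tau)\mathds{1}_n,
\end{equation*}
which satisfies $\zeta(0)>0$ thanks to $X(0)\succ0$ being positive diagonal. Because $X(\tau)$ is diagonal, $\zeta(\tau)^T=\mathds{1}_n^T X(\tau)$ and $\dot{\zeta}(\tau)^T=\mathds{1}_n^T \dot{X}(\tau)$. A direct computation gives
\begin{equation*}
\dot{\zeta}(\tau)^T+\zeta(\tau)^T A_e(\tau)=\mathds{1}_n^T\bigl[\dot{X}(\tau)+X(\tau)A-U_c(\tau)C_c\bigr],
\end{equation*}
and
\begin{equation*}
\zeta(0)^T J_e-\zeta(\theta)^T+\eps\mathds{1}_n^T=\mathds{1}_n^T\bigl[X(0)J-U_dC_d-X(\theta)+\eps I_n\bigr],
\end{equation*}
so \eqref{eq:th1:5}--\eqref{eq:th1:6} are precisely the two inequalities required by Theorem \ref{th:rangeDT}(b). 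Asymptotic stability of the unforced error dynamics under $T_k\in[T_{min},T_{max}]$ follows; combining this with the positivity already established and invoking Proposition \ref{prop:input} yields input-to-state stability of \eqref{eq:error} with respect to the bounded nonnegative signals $\delta_c^\bullet$ and $\delta_d^\bullet$, which is exactly the conclusion needed to guarantee $x^-(t)\le x(t)\le x^+(t)$ for all $t\ge0$.

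The only subtle step is the Metzler-preservation argument used to pass from the entrywise inequality \eqref{eq:th1:1} back to Metzlerness of $A-\tilde{L}_c(\tau)C_c$; this is exactly where restricting $X(\tau)$ to be \emph{diagonal} pays off, since a full positive $X(\tau)$ would mix rows under the substitution $U_c=X\tilde{L}_c$ and destroy the row-wise sign structure. Everything else in the proof is an essentially mechanical rearrangement using $X$ diagonal, $\mathds{1}_n^T I_n=\mathds{1}_n^T$, and the two earlier results from Section \ref{sec:preliminary}.
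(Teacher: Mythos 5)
Your proof is correct and follows essentially the same route as the paper's (two-sentence) proof: the diagonal structure of $X$ plus the substitution $U_c=X\tilde{L}_c$, $U_d=X(0)L_d$ turns \eqref{eq:th1:1}--\eqref{eq:th1:4} into the positivity requirements of Problem \ref{problem1}(a), and the choice $\zeta(\tau)=X(\tau)\mathds{1}_n$ turns \eqref{eq:th1:5}--\eqref{eq:th1:6} into the certificate of Theorem \ref{th:rangeDT}(b) for the error dynamics; you simply spell out the computations the paper leaves implicit. The one soft spot, shared with the paper, is the invertibility of $X(\tau)$ for all $\tau\in(0,T_{max})$: your appeal to ``continuity'' from $X(0)\succ0$ does not by itself rule out a diagonal entry crossing zero, so strictly speaking this should be added as a hypothesis or derived from an extra condition such as $X(\tau)\succ0$ on $[0,T_{max}]$.
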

\begin{proof}
From the diagonal structure of the matrix-valued function $X(\cdot)$ and the changes of variables \eqref{eq:formula1}, the inequalities \eqref{eq:th1:1} to \eqref{eq:th1:4} are readily equivalent to saying that the statement (a) of Problem \ref{problem1} holds. Using now the changes of variables $\lambda(\tau)=X(\tau)\mathds{1}_n$ and \eqref{eq:formula1}, we get that the feasibility of \eqref{eq:th1:5}-\eqref{eq:th1:6} is equivalent to saying that the error dynamics \eqref{eq:error} with \eqref{eq:L1} verifies the range dwell-time conditions of Theorem \ref{th:rangeDT} with the same $\lambda(\tau)$.
\end{proof}

\subsection{Minimum dwell-time result}

In the minimum dwell-time case, the time-varying gain  $L_c$ is defined as follows
\begin{equation}\label{eq:L2}
  L_c(t)=\left\{\begin{array}{ll}
  \tilde{L}_c(t-t_k)& \textnormal{if }t\in(t_k,t_{k}+\tau]\\
  \tilde{L}_c(\bar T)& \textnormal{if }t\in(t_k+\bar{T},t_{k+1}]
  \end{array}\right.
\end{equation}
where $\tilde{L}_c:\mathbb{R}_{\ge0}\mapsto\mathbb{R}^{n\times q_c}$ is a function to be determined. As in the range dwell-time case, the structure is chosen to facilitate the derivation of convex design conditions. The observation problem is defined, in this case, as follows:
\begin{problem}\label{problem2}
Find an interval observer of the form \eqref{eq:obs} (i.e. a matrix-valued function $L_c(\cdot)$ of the form \eqref{eq:L2} and a matrix $L_d\in\mathbb{R}^{n\times q_d}$) such that the error dynamics \eqref{eq:error} is
  \begin{enumerate}[(a)]
    \item state-positive, that is
    \begin{itemize}
      \item $A-L_c(\tau) C_c$ is Metzler for all $\tau\in[0,\bar{T}]$,
      \item $E_c-L_c(\tau) F_c $ is nonnegative for all $\tau\in[0,\bar{T}]$,
      \item $J-L_dC_d$ and $E_d-L_dF_d$ are nonnegative; and
    \end{itemize}
    \item asymptotically stable under minimum dwell-time $\bar T$ when $w_c\equiv0$ and $w_d\equiv0$.
  \end{enumerate}
\end{problem}

The following result provides a sufficient condition for the solvability of Problem \ref{problem2}:
\begin{theorem}\label{th:2}
There exists a differentiable matrix-valued function $X:[0,\bar T]\mapsto\mathbb{D}^n$, $X(\bar T)\succ0$, a matrix-valued function $U_c:[0,\bar T]\mapsto\mathbb{R}^{n\times q_c}$, a matrix $U_d\in\mathbb{R}^{n\times q_d}$ and scalars $\eps,\alpha>0$ such that the conditions
\begin{subequations}\label{eq:th2a}
\begin{alignat}{4}
           X(\tau)A-U_c(\tau)C_c+\alpha I_n&\ge0\label{eq:mp1}\\
           X(\bar T) J-U_d C_d&\ge0\label{eq:mp2}\\
           X(\tau)E_c-U_c(\tau)F_c&\ge0\label{eq:mp2}\\
           X(\bar T)E_d-U_d F_d&\ge0\label{eq:mp2}
  \end{alignat}
\end{subequations}
and
    \begin{subequations}\label{eq:th2b}
\begin{alignat}{4}
           \mathds{1}^T_n\left[X(\bar T)A-U_c(\bar T)C_c+\eps I_n\right]&\le0\label{eq:md1}\\
           \mathds{1}^T_n\left[-\dot{X}(\tau)+X(\tau)A-U_c(\tau)C_c\right]&\le0\label{eq:md2}\\
            \mathds{1}^T_n\left[X(\bar T) J-U_d C_d-X(0)+\eps I\right]&\le0\label{eq:md3}
\end{alignat}
\end{subequations}
hold for all $\tau\in[0,\bar T]$.  Then, there exists an interval observer of the form \eqref{eq:obs}-\eqref{eq:L2} that solves Problem \ref{problem2} and suitable observer gains are given by
\begin{equation}
  \tilde{L}_c(\tau)= X(\tau)^{-1}U_c(\tau)\quad \textnormal{and}\quad L_d=X(\bar{T})^{-1}U_d.
\end{equation}
\end{theorem}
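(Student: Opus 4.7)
The plan is to follow the same two-stage template used in the proof of Theorem \ref{th:1}, simply substituting the minimum dwell-time stability characterization (Theorem \ref{th:minDT}) for the range dwell-time one. The argument splits into a positivity part that certifies clause (a) of Problem \ref{problem2}, and a stability part that certifies clause (b).

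For positivity, I first note that $X(\tau)$ is diagonal with $X(\bar T)\succ 0$; continuity plus the diagonal structure (and the componentwise sign information contained in \eqref{eq:md2}) ensures $X(\tau)$ remains diagonally positive definite on $[0,\bar T]$, so the change of variables $\tilde{L}_c(\tau)=X(\tau)^{-1}U_c(\tau)$ and $L_d=X(\bar T)^{-1}U_d$ is well defined. Premultiplying the four inequalities in \eqref{eq:th2a} by $X(\tau)^{-1}$ (respectively $X(\bar T)^{-1}$), which is diagonal and preserves entrywise nonnegativity, converts them into $A-\tilde{L}_c(\tau)C_c+\alpha X(\tau)^{-1}\ge 0$, $E_c-\tilde{L}_c(\tau)F_c\ge 0$, $J-L_d C_d\ge 0$, and $E_d-L_d F_d\ge 0$. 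Since the added diagonal term $\alpha X(\tau)^{-1}$ perturbs only the diagonal entries, the first of these guarantees the Metzler property of $A-\tilde{L}_c(\tau)C_c$, and the remaining three are precisely the nonnegativity requirements in Problem \ref{problem2}(a).

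For the stability part, I would introduce $\zeta(\tau):=X(\tau)\mathds{1}_n$, so that $\mathds{1}_n^T X(\tau)=\zeta(\tau)^T$ by diagonality, and $\zeta(\bar T)>0$ follows from $X(\bar T)\succ 0$. Substituting into \eqref{eq:th2b} and using the gain formulas rewrites \eqref{eq:md1}, \eqref{eq:md2}, \eqref{eq:md3} as, respectively, $\zeta(\bar T)^T(A-\tilde{L}_c(\bar T)C_c)+\eps\mathds{1}^T\le 0$, $-\dot\zeta(\tau)^T+\zeta(\tau)^T(A-\tilde{L}_c(\tau)C_c)\le 0$, and $\zeta(\bar T)^T(J-L_d C_d)-\zeta(0)^T+\eps\mathds{1}^T\le 0$. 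These are exactly statement (b) of Theorem \ref{th:minDT} applied to the closed-loop data $A_{cl}(\tau):=A-\tilde{L}_c(\tau)C_c$ and $J_{cl}:=J-L_d C_d$ driving the error dynamics \eqref{eq:error}, which, combined with the positivity already established, yields asymptotic stability of $e^\pm$ under minimum dwell-time $\bar T$ when $\delta_c^\bullet\equiv 0$ and $\delta_d^\bullet\equiv 0$. Boundedness of $e^\pm$ under bounded $\delta_c^\bullet,\delta_d^\bullet$ (so that the interval inclusion $x^-\le x\le x^+$ is maintained with bounded width) then follows from the straightforward minimum dwell-time analogue of Proposition \ref{prop:input}.

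The main obstacle is essentially bookkeeping rather than any new idea: one must verify that left-multiplying by the diagonal matrix $X(\tau)^{-1}$ legitimately converts matrix inequalities into the componentwise inequalities needed for Metzlerness and nonnegativity, and that the piecewise definition of $L_c(t)$ in \eqref{eq:L2}, including the constant extension $\tilde{L}_c(\bar T)$ beyond $\bar T$, makes the closed-loop $A_{cl}(\cdot)$ satisfy the structural hypothesis $A_{cl}(\tau)=A_{cl}(\bar T)$ for $\tau\ge\bar T$ required by Theorem \ref{th:minDT}. Since Theorem \ref{th:1} already handles analogous bookkeeping for the range dwell-time case, the argument transfers with no genuinely new technical difficulty, which is why the authors are likely to omit the detailed calculation.
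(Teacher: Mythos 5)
Your proposal matches the paper's (omitted) proof: the authors prove Theorem \ref{th:2} exactly as Theorem \ref{th:1}, i.e., by exploiting the diagonal structure of $X(\cdot)$ and the change of variables $\tilde L_c(\tau)=X(\tau)^{-1}U_c(\tau)$, $L_d=X(\bar T)^{-1}U_d$ to recover the positivity requirements of Problem \ref{problem2}(a) from \eqref{eq:th2a}, and by setting $\zeta(\tau)=X(\tau)\mathds{1}_n$ to identify \eqref{eq:th2b} with statement (b) of Theorem \ref{th:minDT} applied to the closed-loop error dynamics. The only caveat---shared with the paper, which leaves it implicit---is that positivity (hence invertibility) of $X(\tau)$ on all of $[0,\bar T]$ is needed both for the gain formula and for the sign-preserving multiplication by $X(\tau)^{-1}$; your appeal to the sign information in \eqref{eq:md2} does not actually establish this, and in practice it is imposed as an additional constraint in the implementation.
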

\begin{proof}
The proof is similar to the one of Theorem \ref{th:1} and is omitted.
\end{proof}

\section{Computational considerations}\label{sec:computational}

Several methods can be used to check the conditions stated in Theorem \ref{th:1} and Theorem \ref{th:2}. 
We opt here for an approach based on Putinar's Positivstellensatz \cite{Putinar:93} and semidefinite programming \cite{Parrilo:00}\footnote{See \cite{Briat:16c} for a comparison of all these methods.}. Before stating the main result of the section, we need to define first some terminology. A multivariate polynomial $p(x)$ is said to be a sum-of-squares (SOS) polynomial if it can be written as $\textstyle p(x)=\sum_{i}q_i(x)^2$ for some polynomials $q_i(x)$. A polynomial matrix $p(x)\in\mathbb{R}^{n\times m}$ is said to \emph{componentwise sum-of-squares} (CSOS) if each of its entries is an SOS polynomial. Checking whether a polynomial is SOS can be exactly cast as a semidefinite program \cite{Parrilo:00} that can be easily solved using semidefinite programming solvers such as SeDuMi \cite{Sturm:01a}. The package SOSTOOLS \cite{sostools3} can be used to formulate and solve SOS programs in a convenient way.

Below is the SOS implementation of the conditions of statement (b) of Theorem \ref{th:1}:
\begin{proposition}\label{prop:SOS1}
  Let $d\in\mathbb{N}$, $\eps>0$ and $\epsilon>0$ be given and assume that there exist polynomials $\chi_i:\mathbb{R}\mapsto\mathbb{R}$, $i=1,\ldots,n$, $U_c:\mathbb{R}\mapsto\mathbb{R}^{n\times q_c}$, $\Gamma_1:\mathbb{R}\mapsto\mathbb{R}^{n\times n}$,  $\Gamma_2:\mathbb{R}\mapsto\mathbb{R}^{n\times q_c}$ and $\gamma_1,\gamma_2:\mathbb{R}\mapsto\mathbb{R}^{n}$ of degree $2d$, a matrix $U_d\in\mathbb{R}^{n\times q_d}$ and a scalar $\alpha\ge0$ such that
  \begin{enumerate}[(a)]
    \item $\Gamma_i(\tau)$, $\gamma_i(\tau)$, $i=1,2$ are CSOS,
    \item $X(0)-\epsilon I_n\ge0$ (or is CSOS),
    \item $X(\tau)A-U_c(\tau)C_c+\alpha I_n-\Gamma_1(\tau)f(\tau)$ is CSOS,
    \item $X(0) J-U_d C_d\ge0$ (or is CSOS),
    \item $X(\tau)E_c-U_c(\tau)F_c-\Gamma_2(\tau)f(\tau)$ is CSOS,
    \item $X(0)E_d-U_d F_d\ge0$ (or is CSOS),
    \item $-\mathds{1}^T_n\left[\dot{X}(\tau)+X(\tau)A-U_c(\tau)C_c\right]-f(\tau)\gamma_1(\tau)^T$

    is CSOS,
    \item $-\mathds{1}^T_n\left[X(0) J-U_d C_d-X(\theta)+\eps I\right]-g(\theta)\gamma_2(\theta)^T$

     is CSOS,
  \end{enumerate}
  where $X(\tau):=\diag_{i=1}^n(\chi_i(\tau))$, $f(\tau):=\tau(T_{max}-\tau)$ and $g(\theta):=(\theta-T_{min})(T_{max}-\theta)$.

  Then, the conditions of statement (b) of Theorem \ref{th:1} hold with the same $X(\tau)$, $U_c(\tau)$, $U_d$, $\alpha$ and $\eps$.
\end{proposition}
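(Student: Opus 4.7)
The plan is to verify, essentially one inequality at a time, that each of the SOS-type conditions (a)--(h) of the proposition implies the corresponding matrix inequality in Theorem~\ref{th:1}. The whole argument rests on two elementary facts: (i) any CSOS polynomial matrix is entrywise nonnegative on all of $\mathbb{R}$, since each entry is a real sum of squared polynomials; and (ii) the certificate polynomials satisfy $f(\tau)=\tau(T_{max}-\tau)\ge 0$ for $\tau\in[0,T_{max}]$ and $g(\theta)=(\theta-T_{min})(T_{max}-\theta)\ge 0$ for $\theta\in[T_{min},T_{max}]$. These are exactly the quadratic defining polynomials of the two intervals, so the construction is a direct use of Putinar's Positivstellensatz with quadratic multipliers and CSOS coefficients.

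First I would handle the positivity inequalities \eqref{eq:th1:1}--\eqref{eq:th1:4}. Conditions (d) and (f) of the proposition are literally \eqref{eq:th1:2} and \eqref{eq:th1:4} (the CSOS alternative just expresses entrywise nonnegativity via an SOS certificate), so there is nothing to do. For \eqref{eq:th1:1}, condition (c) states that
\begin{equation*}
X(\tau)A-U_c(\tau)C_c+\alpha I_n-\Gamma_1(\tau)f(\tau)
\end{equation*}
is CSOS, hence componentwise $\ge 0$ for all $\tau\in\mathbb{R}$. Rearranging and restricting to $\tau\in[0,T_{max}]$, and using that $\Gamma_1(\tau)$ is CSOS (entrywise $\ge 0$) while $f(\tau)\ge 0$ there, the product $\Gamma_1(\tau)f(\tau)$ is entrywise nonnegative and \eqref{eq:th1:1} follows. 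The same trick with $\Gamma_2$ in condition (e) gives \eqref{eq:th1:3}.

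Next I would repeat the same multiplier argument for the row-vector inequalities \eqref{eq:th1:5}--\eqref{eq:th1:6}. Condition (g) certifies that $-\mathds{1}_n^T[\dot X(\tau)+X(\tau)A-U_c(\tau)C_c]-f(\tau)\gamma_1(\tau)^T$ is CSOS, so the same reasoning on $[0,T_{max}]$ yields \eqref{eq:th1:5} (the minus sign in front is exactly why the certificate is written as $-(\cdot)-f\gamma_1^T$ CSOS rather than $(\cdot)-\Gamma f$ CSOS). Condition (h), with the certificate $g(\theta)$ on $[T_{min},T_{max}]$, similarly yields \eqref{eq:th1:6}. Finally, the strict positivity $X(0)\succ 0$ required in the statement of Theorem~\ref{th:1} is delivered directly by condition (b) together with $\epsilon>0$, since $X(\tau)$ is diagonal by construction.

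There is no real obstacle here: the argument is a mechanical assembly of one-line Positivstellensatz-style inequalities, and differentiability of $X(\tau)$ is automatic because each $\chi_i$ is polynomial. The only subtlety worth flagging is that the multipliers $\Gamma_1,\Gamma_2,\gamma_1,\gamma_2$ must be \emph{componentwise} SOS rather than merely scalar SOS; this is what guarantees that the products $\Gamma_i(\tau)f(\tau)$ and $f(\tau)\gamma_i(\tau)^T$ have a controlled sign \emph{entry by entry}, which is exactly what is needed since the inequalities in Theorem~\ref{th:1} are themselves componentwise (a scalar SOS multiplier multiplying a sign-indefinite matrix would not suffice).
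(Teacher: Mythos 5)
Your proof is correct and follows essentially the same route as the paper: the paper's formal proof merely cites the analogous Proposition 3.15 of \cite{Briat:16c}, but the explanatory paragraph following the proposition spells out exactly your argument (CSOS $\Rightarrow$ entrywise nonnegativity on $\mathbb{R}$, then the quadratic multipliers $f$ and $g$ localize the inequality to $[0,T_{max}]$ and $[T_{min},T_{max}]$ respectively). Nothing further is needed.
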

\begin{proof}
The proof follows from the same arguments as the proof of Proposition 3.15 in \cite{Briat:16c}.
\end{proof}

The conditions stated in the above results can be readily implemented using SOSTOOLS \cite{sostools3}. However, it seems important to explain the meaning of those conditions. Clearly, the condition (a) implies that $\Gamma_i(\tau)\ge0$, $\gamma_i(\tau)\ge0$, for all $i=1,2$ and all $\tau\in\mathbb{R}$. The conditions (b), (d) and (f) are here to indicate that the corresponding expressions are nonnegative and are equivalent to the conditions $X(0)\succ0$, \eqref{eq:th1:2} and \eqref{eq:th1:4}, respectively. The condition (c) implies that  $X(\tau)A-U_c(\tau)C_c+\alpha I_n-\Gamma_1(\tau)f(\tau)\ge0$ for all $\tau\in\mathbb{R}$. This equivalent to saying that $X(\tau)A-U_c(\tau)C_c+\alpha I_n\ge \Gamma_1(\tau)f(\tau)$ for all $\tau\in\mathbb{R}$ and, hence, that $X(\tau)A-U_c(\tau)C_c+\alpha I_n\ge0$ for all $\tau\in[0,T_{max}]$, which coincides with the condition \eqref{eq:th1:1}. The other conditions can analyzed in the same way.

\begin{remark}[Asymptotic exactness]
The above relaxation is asymptotically exact under very mild conditions \cite{Putinar:93} in the sense that if the original conditions of Theorem \ref{th:1} hold then we can find a degree $d$ for which the conditions in Proposition \ref{prop:SOS1} are feasible. See \cite{Briat:16c} for more details.
\end{remark}

\section{Examples}\label{sec:examples}

\subsection{An impulsive system}

Let us consider here the example from \cite{Briat:13d} to which we add disturbances as also done in \cite{Degue:16nolcos}. The matrices of the system are given by
\begin{equation}\label{eq:ex1}
\begin{array}{l}
    A=\begin{bmatrix}
    -1 & 0\\
    1 & -2
  \end{bmatrix}, E_c=\begin{bmatrix}
    0.1\\
    0.1
  \end{bmatrix}, J=\begin{bmatrix}
2 & 1\\
1 & 3
  \end{bmatrix},  E_d=\begin{bmatrix}
    0.3\\
    0.3
  \end{bmatrix},\\
    C_c=C_d=\begin{bmatrix}
    0 & 1
  \end{bmatrix}, F_c=F_d=0.03.
\end{array}
\end{equation}
Define also $w_c(t)=\sin(t)$, $w^-(t)=-1$, $w^+(t)=1$, $w_d(k)$ is a stationary random process that follows the uniform distribution $\mathcal{U}(-0.5,0.5)$, $w_d^-=-0.5$ and $w_d^+=0.5$. Letting a desired minimum dwell-time of $\bar T=0.7$ and solving the conditions of Theorem \ref{th:2} with polynomials of degree 4 yield the observer gains
\begin{equation}\label{eq:Lex1}
  L_d=\begin{bmatrix}
    0.9977\\
    1.6460
  \end{bmatrix}\quad \textnormal{and}\quad L_c(\tau)=\begin{bmatrix}
    n_1(\tau)d_1(\tau)^{-1}\\
    n_2(\tau)d_2(\tau)^{-1}
  \end{bmatrix}
\end{equation}
where
\begin{equation}
  \begin{array}{rcl}
    n_1(\tau)&=&0.3064\tau^4   -0.4410\tau^3+    0.2132\tau^2\\
    &&   -0.0409\tau+    0.0043\\
    d_1(\tau)&=&0.1999\tau^4   -0.0447\tau^3   -1.0739\tau^2\\
    &&    +2.6471\tau   -2.7157\\
    n_2(\tau)&=&-0.5400 \tau^4   +0.6047\tau^3   -0.0939\tau^2\\
    &&   -0.0771\tau+    0.0251\\
    d_2(\tau)&=&0.0633\tau^4   -0.1169\tau^3+    0.0868\tau^2 \\
    &&  -0.0359 \tau^1+   0.0101.
  \end{array}
\end{equation}
For information, the semidefinite program has 242 primal variables, 76 dual variables and it takes 2.18 seconds to solve on an i7-2620M with 8GB of RAM. To illustrate this result, we generate random impulse times satisfying the minimum dwell-time condition and we obtain the trajectories depicted in Fig.~\ref{fig:states} where we can observe the ability of the interval observer to properly frame the trajectory of the system.

\begin{figure}
  \centering
  \includegraphics[width=0.75\textwidth]{./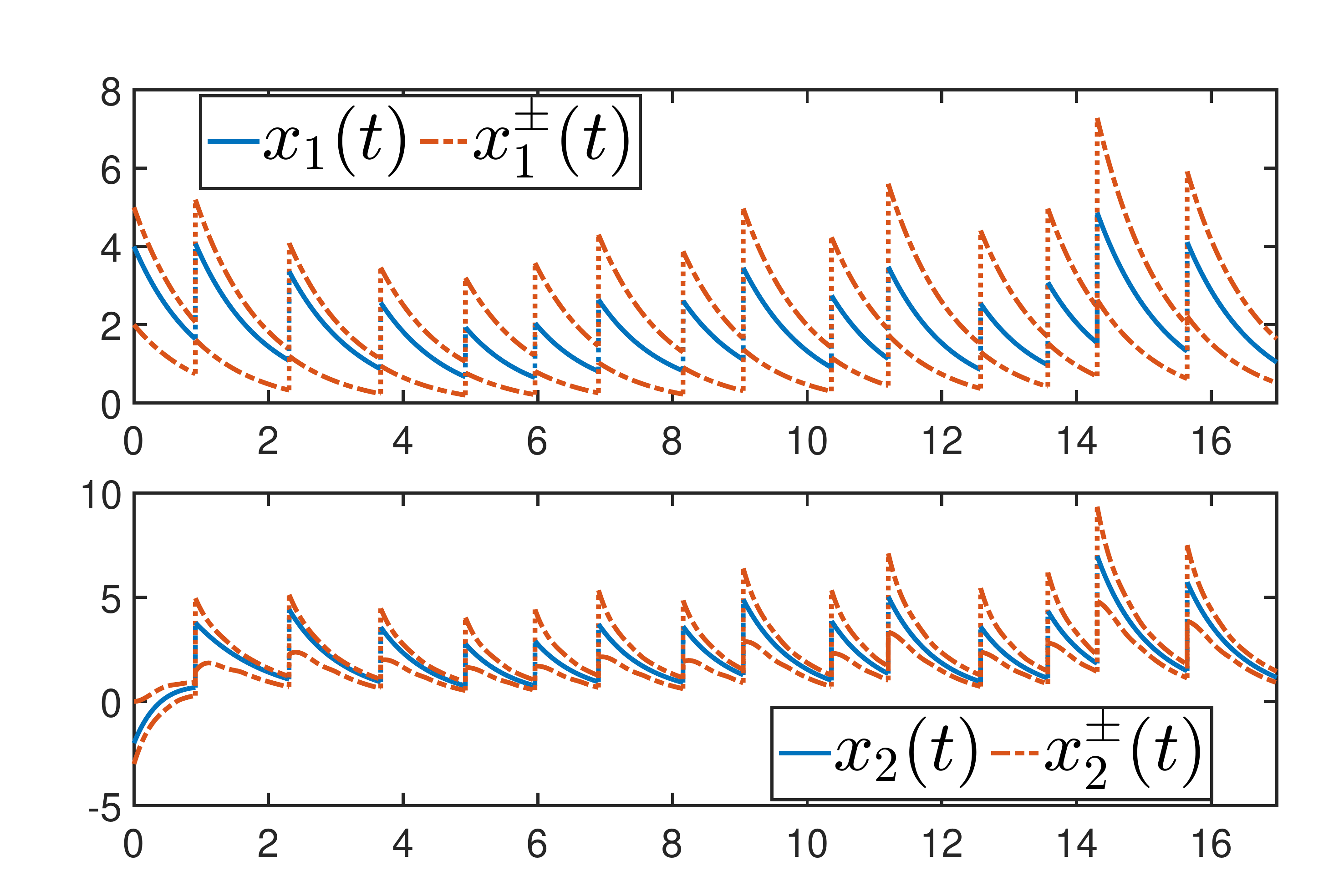}
  \caption{Trajectories of the system \eqref{eq:mainsyst}-\eqref{eq:ex1} and the interval observer \eqref{eq:obs}-\eqref{eq:L2}-\eqref{eq:Lex1} for some randomly chosen impulse times satisfying the minimum dwell-time $\bar{T}=0.7$.}\label{fig:states}
\end{figure}

\subsection{A sampled-data system}

Let us consider now the sampled-data system
\begin{equation}
\begin{array}{rcl}
  \begin{bmatrix}
    \dot{x}(t)\\
    \dot{u}(t)
  \end{bmatrix}&=&\begin{bmatrix}
    A & B\\
    0 & 0
  \end{bmatrix}\begin{bmatrix}
    x(t)\\
    u(t)
  \end{bmatrix}+\begin{bmatrix}
    E\\
    0
  \end{bmatrix}w(t)\\
  \begin{bmatrix}
    x(t_k^+)\\
    u(t_k^+)
  \end{bmatrix}&=&\begin{bmatrix}
    I & 0\\
    K_1C_y & K_2
  \end{bmatrix}\begin{bmatrix}
    x(t_k)\\
    u(t_k)
  \end{bmatrix}+\begin{bmatrix}
    0\\
    K_1F_y
  \end{bmatrix}w(t_k)
  \end{array}
\end{equation}
which incorporates in its formulation the sampled-data static-output feedback control law $u(t)=K_1(C_yx(t_k)+F_yw(t_k))+K_2u(t_k)$, $t\in(t_k,t_{k+1}]$. The goal would be the design of an impulsive interval observer for this system and, to this aim, we consider the measured output $y=\diag(C_yx+F_yw_c,u)$. Note that we have here $y_c(t)=y(t)$, $y_d(k)=y(t_k)$, $w_c(t)=w(t)$ and $w_d(k)=w(t_k)$. We then propose the observer \eqref{eq:obs} with the matrices
\begin{equation}
  L_c(t)=\begin{bmatrix}
    L_c^1(t) & B\\
    0 & L_c^2(t)
  \end{bmatrix}\ \textnormal{and}\   L_d=\begin{bmatrix}
    L_d^1 & 0\\
    K_1 & K_2
  \end{bmatrix}.
\end{equation}
Note that this observer contains a continuous-time component which may be contradictory with the fact we are considering a sampled-data system. However, if the observer is sampled at a much higher frequency than the controller, this approximation is, in general, satisfying. The dynamics of the observation error is given, in this case, by
\begin{equation}\label{eq:error_sampled}
\begin{array}{rcl}
  \begin{bmatrix}
    \dot{e}_x^\bullet(t)\\
    \dot{e}_u^\bullet(t)
  \end{bmatrix}&=&\begin{bmatrix}
    A-L_c^1(t)C_y & 0\\
    0 & -L_c^2(t)
  \end{bmatrix}\begin{bmatrix}
    e_x^\bullet(t)\\
    e_u^\bullet(t)
  \end{bmatrix}\\
  &&+\begin{bmatrix}
    E-L_c^1(t)F_y\\
    0
  \end{bmatrix}\delta_c^\bullet(t)\\
  \begin{bmatrix}
    e_x^\bullet(t_k^+)\\
    e_u^\bullet(t_k^+)
  \end{bmatrix}&=&\begin{bmatrix}
    I-L_d^1C_y & 0\\
    0 & 0
  \end{bmatrix}\begin{bmatrix}
    e_x^\bullet(t_k)\\
    e_u^\bullet(t_k)
  \end{bmatrix}
  \end{array}
\end{equation}
where we can see that with this observer gains, the dynamics of the errors are fully decoupled. Consequently, it is enough to choose $L_c^2(t)$ to be constant, diagonal and large enough. The gains $L_c^1(t)$ and $L_d^1$ can then be designed exactly in the same way as in the previous example.

\subsection{A switched system}

Let us consider here the switched system
\begin{equation}
\begin{array}{rcl}
    \dot{\tilde{x}}(t)&=&\tilde{A}_{\sigma(t)}\tilde{x}(t)+\tilde{E}_{\sigma(t)}w(t)\\
     \tilde{y}(t)&=&\tilde{C}_{\sigma(t)}\tilde{x}(t)+\tilde{F}_{\sigma(t)}w(t)
\end{array}
\end{equation}
where $\sigma:\mathbb{R}_{\ge0}\mapsto\{1,\ldots,N\}$ is the switching signal, $\tilde x\in\mathbb{R}^n$ is the state of the system, $\tilde{w}\in\mathbb{R}^p$ is the exogenous input and $\tilde{y}\in\mathbb{R}^p$ is the measured output.

This system can be rewritten into the following impulsive system with multiple jump maps \cite{Briat:16c}
\begin{equation}\label{eq:swimp}
\begin{array}{rcl}
    \dot{x}(t)&=&\diag_{i=1}^N(\tilde{A}_{i})x(t)+\col_{i=1}^N(\tilde{E}_{i})w(t)\\
     y(t)&=&\diag_{i=1}^N(\tilde{C}_{i})x(t)+\col_{i=1}^N(\tilde{F}_{i})w(t)\\
     x(t_k^+)&=&J_{ij}x(t_k),\ i,j=1,\ldots,N,\ i\ne j
\end{array}
\end{equation}
where $J_{ij}:=(b_ib_j^T)\otimes I_n$ and $\{b_1,\ldots,b_N\}$ is the standard basis for $\mathbb{R}^N$.
Because of the particular structure of the system, we can define w.l.o.g.  an interval observer of the form \eqref{eq:obs} for the system \eqref{eq:swimp} with the gains $L_c(t)=\diag_{i=1}^N(L_c^i(t))$ and $L_d^{ij}=(b_ib_j^T)\otimes \tilde{L}_d^{ij}$. The error dynamics is then given in this case by
\begin{equation}\label{eq:error_switched}
\begin{array}{rcl}
    \dot{e}^\bullet(t)&=&\diag_{i=1}^N(\tilde{A}_{i}-L_c^i(t)\tilde{C}_i)e^\bullet(t)\\
    &&+\col_{i=1}^N(\tilde{E}_{i}-L_c^i(t)\tilde{F}_i)\delta^\bullet(t)\\
   e^\bullet(t_k^+)&=&\left[(b_ib_j^T)\otimes I_n-\tilde{L}_d^{ij}\tilde{C}_j)\right]e^\bullet(t_k)\\
   &&-\left[(b_ib_j^T)\otimes (\tilde{L}_d^{ij}\tilde{F}_j)\right]\delta^\bullet(t_k).
    \end{array}
\end{equation}
Once again, the gains of the observer can be designed as in the previous examples. Note, however, that in this case the stability conditions will be slightly different due to the existence of multiple jump maps. See \cite{Briat:16c} for more details on how to straightforwardly adapt the conditions to this case.

%

%
%
%
%

\bibliographystyle{unsrt}

\end{document}